\numberwithin{equation}{section}
\numberwithin{figure}{section}
\theoremstyle{plain}
\newtheorem{thm}{\protect\theoremname}
\theoremstyle{plain}
\newtheorem{lem}[thm]{\protect\lemmaname}
\theoremstyle{plain}
\newtheorem{cor}[thm]{\protect\corollaryname}
\providecommand{\corollaryname}{Corollary}
\providecommand{\lemmaname}{Lemma}
\providecommand{\theoremname}{Theorem}
\begin{document}
\title{On Certain Genus $0$ Entire Functions}
\author{Ruiming Zhang}
\email{ruimingzhang@guet.edu.cn}
\address{School of Mathematics and Computing Sciences\\
Guilin University of Electronic Technology\\
Guilin, Guangxi 541004, P. R. China. }
\subjclass[2000]{Primary 30C15; 44A10. Secondary 33C10; 11M26.}
\keywords{Complete monotonic functions; Transcendental special functions; Riemann
hypothesis; generalized Riemann hypothesis. }
\thanks{This work is supported by the National Natural Science Foundation
of China, grant No. 11771355 and No. 12161022.}
\begin{abstract}
In this work we prove that an entire function $f(z)$ has only negative
zeros if and only if its order is strictly less $1$, its root sequence
is real-part dominating and there exists an nonnegative integer $m$
the real function $\left(-\frac{1}{x}\right)^{m}\frac{d^{k}}{dx^{k}}\left(x^{k+m}\frac{d^{m}}{dx^{m}}\left(\frac{f'(x)}{f(x)}\right)\right)$
are completely monotonic on $(0,\infty)$ for all nonnegative integer
$k$. As an application we state a necessary and sufficient condition
for the Riemann hypothesis and generalized Riemann hypothesis for
a primitive Dirichlet character.
\end{abstract}

\maketitle

\section{\label{sec:Intro} Introduction }

In this work we study the zeros of entire function $f(z)$, 

\begin{equation}
f(z)=\sum_{n=0}^{\infty}a_{n}z^{n},\quad a_{0}\cdot a_{n}>0,\label{eq:1.1}
\end{equation}
of order $\rho(f)$ strictly less than $1$ where $\rho(f)$ is defined
by

\begin{equation}
\rho(f)=\limsup_{n\to\infty}\frac{n\log n}{-\log\left|a_{n}\right|}.\label{eq:1.2}
\end{equation}
Many transcendental $q$-special functions such as Ramanujan's entire
function $A_{q}(-z)$ is of this type, \cite{Ismail}. 

Since $\rho(f)<1$, then $f(z)$ is necessarily of genus $0$. If
it has any roots, then there exists a sequence of complex numbers
$\left\{ \lambda_{n}\right\} _{n=1}^{\infty}$ such that \cite{Ahlfors,Boas}
\begin{equation}
\frac{f(z)}{f(0)}=\prod_{n=1}^{\infty}\left(1+\frac{z}{\lambda_{n}}\right)\label{eq:1.3}
\end{equation}
with
\begin{equation}
\sum_{n=1}^{\infty}\frac{1}{\left|\lambda_{n}\right|}<\infty.\label{eq:1.4}
\end{equation}
Furthermore, for any $1>\alpha>\rho(f)$, \cite[Theorem 2.5.18]{Boas}
\begin{equation}
\sum_{n=1}^{\infty}\frac{1}{\left|\lambda_{n}\right|^{\alpha}}<\infty.\label{eq:1.5}
\end{equation}
In this work we prove the following:
\begin{thm}
\label{thm:main} Let $f(z)$ be an entire function defined in (\ref{eq:1.1}),
(\ref{eq:1.3}) such that it has a nonzero root and it's of order
strictly less than $1$. If there is a positive number $\beta_{0}$
with $\beta_{0}\in(0,1)$ such that the sequence $\left\{ \lambda_{n}\right\} _{n\in\mathbb{N}}$
in (\ref{eq:1.3}) and (\ref{eq:1.4}) satisfies 
\begin{equation}
\Re(\lambda_{n})\ge\beta_{0}\left|\lambda_{n}\right|>0,\quad\forall n\in\mathbb{N}.\label{eq:1.6}
\end{equation}
Then $f(z)$ has only negative zeros if and only if there exists an
nonnegative integer $m\in\mathbb{N}_{0}$, all the functions
\begin{equation}
G_{k}^{(m)}(x)=\frac{\left(-1\right)^{m}}{x^{m+k}}\left(x\frac{d}{dx}x\right)^{k}\left(x^{m}\frac{d^{m}}{dx^{m}}\left(\frac{f'(x)}{f(x)}\right)\right),\quad\forall k\in\mathbb{N}_{0}\label{eq:1.7}
\end{equation}
are completely monotonic on $(0,\infty)$. The function $G_{k}^{(m)}(x)$
also has a simpler representation,
\begin{equation}
G_{k}^{(m)}(x)=\left(-\frac{1}{x}\right)^{m}\frac{d^{k}}{dx^{k}}\left(x^{k+m}\frac{d^{m}}{dx^{m}}\left(\frac{f'(x)}{f(x)}\right)\right),\label{eq:1.8}
\end{equation}
 they also satisfy the recursion,
\begin{equation}
\begin{aligned} & G_{0}^{(m)}(z)=(-1)^{m}\frac{d^{m}}{dz^{m}}\left(\frac{f'(z)}{f(z)}\right),\\
 & G_{k}^{(m)}(z)=(k+m)G_{k-1}^{(m)}(z)+z\frac{d}{dz}G_{k-1}^{(m)}(z),\quad\forall k\in\mathbb{N}.
\end{aligned}
\label{eq:1.9}
\end{equation}
\end{thm}

Since the Ramanujan's entire function $A_{q}(z)$ is an entire function
of order $0$ with only positive zeros for $0<q<1$, \cite{Ismail}.
Then for all nonnegative integer $k,m$ and $0<q<1$, the real functions
$\frac{(-1)^{m+1}}{x^{m}}\frac{d^{k}}{dx^{k}}\left(x^{k+m}\frac{d^{m}}{dx^{m}}\left(\frac{A'_{q}(-x)}{A_{q}(-x)}\right)\right)$
are completely monotonic on $(0,\infty)$. However, there are many
other transcendental special functions of the form

\begin{equation}
g(z)=\sum_{n=0}^{\infty}(-1)^{n}a_{n}z^{2n},\quad a_{0}\cdot a_{n}>0,\label{eq:1.10}
\end{equation}
for example, the Riemann $\Xi(z)$, and the Bessel functions $\frac{J_{\nu}(z)}{z^{\nu}}$
, $\frac{J_{\nu}^{(2)}(z;q)}{z^{\nu}}$ with $\nu>-1,\ 0<q<1$, \cite{AndrewsAskeyRoy,Ismail}.
If the order of $g(z)$ is strictly less than $2$,
\begin{equation}
\rho(g)=\limsup_{n\to\infty}\frac{2n\log(2n)}{-\log\left|a_{n}\right|}=2\rho(f)<2,\label{eq:1.11}
\end{equation}
then it is not hard to see that Theorem (\ref{thm:main}) can be applied
to $g(z)$ that has at less one zero. The condition (\ref{eq:1.6})
is replaced by
\begin{equation}
\Re(z_{n}^{2})>0,\quad\left|\Im(z_{n})\right|\le M,\quad\forall n\in\mathbb{N}.\label{eq:1.12}
\end{equation}
Here $M>0$ is a positive constant that only depends on $g(z)$, and
$\left\{ \pm z_{n}\right\} _{n\in\mathbb{N}}$ are the roots of $g(z)$
with $\Re(z_{n})>0,\ \forall n\in\mathbb{N}$. 

Clearly, $f(z)=g(i\sqrt{z})$ transforms $g(z)$ in (\ref{eq:1.10})
into $f(z)$ in (\ref{eq:1.1}) of order strictly less than $1$,
and the zeros $\left\{ \pm z_{n}\right\} _{n\in\mathbb{N}}$ of $g(z)$
into the zeros $\left\{ -\lambda_{n}\right\} _{n\in\mathbb{N}}$ of
$f(z)$ with $\lambda_{n}=z_{n}^{2}$. Therefore, $\lambda_{n}>0$
if and only if $z_{n}\in\mathbb{R}$.

Since $g(z)$ is an entire function not identically $0$, then $\left\{ \pm z_{n}\right\} _{n\in\mathbb{N}}$
has no limit points on the finite complex plane. By (\ref{eq:1.10})
and (\ref{eq:1.12}) we must have
\begin{equation}
\lim_{N\to\infty}\left|z_{n}\right|=+\infty,\quad\lim_{N\to\infty}\Re(z_{n})=+\infty.\label{eq:1.13}
\end{equation}
 Without loss of generality we may let
\begin{equation}
0<\Re(z_{1})\le\Re(z_{2})<\dots\le\Re(z_{n})\le\Re(z_{n+1})\le\dots.\label{eq:1.14}
\end{equation}
Then for any $\epsilon\in(0,1)$ there exists a $N_{\epsilon}\in\mathbb{N}$
such that
\begin{equation}
\epsilon\left|z_{n}\right|^{2}\ge2M^{2},\quad\forall n\ge N_{\epsilon}.\label{eq:1.15}
\end{equation}
Hence, for all $n\ge N_{\epsilon}$
\begin{equation}
0<\Re(z_{n}^{2})=\left|z_{n}\right|^{2}-2\left(\Im(z_{n})\right)^{2}\ge\left|z_{n}\right|^{2}-2M^{2}\ge(1-\epsilon)\left|z_{n}\right|^{2}.\label{eq:1.16}
\end{equation}
Let 
\begin{equation}
\beta_{0}=\min\left\{ 1-\epsilon,\frac{\Re(z_{1}^{2})}{\left|z_{1}^{2}\right|},\dots,\frac{\Re(z_{N_{\epsilon}}^{2})}{\left|z_{N_{\epsilon}}^{2}\right|}\right\} \in(0,1),\label{eq:1.17}
\end{equation}
then it is clear that 
\begin{equation}
\Re(z_{n}^{2})\ge\beta_{0}\left|z_{n}^{2}\right|,\quad\forall n\in\mathbb{N},\label{eq:1.18}
\end{equation}
thus $\lambda_{n}=z_{n}^{2}$ satisfies (\ref{eq:1.6}). 

It is known that for $\nu>-1$ the even entire function $\frac{J_{\nu}(z)}{z^{\nu}}$
is of order $1$ with only real zeros, while for $\nu>-1,\ 0<q<1$
the $q$-Bessel function $\frac{J_{\nu}^{(2)}(z;q)}{z^{\nu}}$ is
an even entire functions that of order $0$ with only real zeros,
\cite{Ismail}. Applying Theorem \ref{thm:main} to the Bessel functions
we conclude that for all nonnegative integers $k,m$ and any positive
number $q$ satisfying $0<q<1$, the functions $\left(-\frac{1}{x}\right)^{m}\frac{d^{k}}{dx^{k}}\left(x^{k+m}\frac{d^{m}}{dx^{m}}\left(\frac{f'(x)}{f(x)}\right)\right)$
are completely monotonic on $(0,\infty)$ where $f(x)=x^{-\nu/2}I_{\nu}(\sqrt{x})$
or $f(x)=x^{-\nu/2}I_{\nu}^{(2)}(\sqrt{x};q)$.

\section{\label{sec:proof}Proof of the Theorem }

\subsection{The heat kernel $\Theta(x)$}
\begin{lem}
\label{lem:2} Let $f(z)$ be an entire function defined in (\ref{eq:1.1}),
(\ref{eq:1.3}) with order $\rho(f)<1$ such that there exists a positive
number $\beta_{0}\in(0,1)$ it satisfies (\ref{eq:1.6}). Then its
associated heat kernel
\begin{equation}
\Theta(t)=\sum_{n=1}^{\infty}e^{-\lambda_{n}t},\quad t>0\label{eq:2.1}
\end{equation}
 is in $C^{\infty}(0,\infty)$, and for any 
\begin{equation}
k\in\mathbb{N}_{0},\ 1>\alpha>p(f),\ 0<\beta<\beta_{0}\inf\left\{ \left|\lambda_{n}\right|\bigg|n\in\mathbb{N}\right\} ,\label{eq:2.2}
\end{equation}
the function $\Theta(t)$ satisfies 

\begin{equation}
\Theta^{(k)}(t)=\mathcal{O}\left(t^{-\alpha-k}\right),\quad t\to0^{+}\label{eq:2.3}
\end{equation}
and 
\begin{equation}
\Theta^{(k)}(t)=\mathcal{O}\left(e^{-\beta t}\right),\quad t\to+\infty.\label{eq:2.4}
\end{equation}
Furthermore, for all $k\in\mathbb{N}_{0}$and $x\ge0$,
\begin{equation}
\int_{0}^{\infty}t^{k}e^{-xt}\left|\Theta(t)\right|dt\le\frac{k!}{\beta_{0}^{k+1}}\sum_{n=1}^{\infty}\frac{1}{\left|\lambda_{n}\right|^{k+1}}<\infty\label{eq:2.5}
\end{equation}
and
\begin{equation}
(-1)^{k}\left(\frac{f'(x)}{f(x)}\right)^{(k)}=\sum_{n=1}^{\infty}\frac{k!}{(x+\lambda_{n})^{k+1}}=\int_{0}^{\infty}e^{-xt}t^{k}\Theta(t)dt.\label{eq:2.6}
\end{equation}
\end{lem}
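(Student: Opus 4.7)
The plan is to convert every claim into a statement about positive real exponentials by invoking the single pointwise estimate $|e^{-\lambda_{n}t}|=e^{-\Re(\lambda_{n})t}\le e^{-\beta_{0}|\lambda_{n}|t}$, which is immediate from (1.6), and then to combine it with the summability (1.5). All interchanges of summation with differentiation or integration will be justified by uniform-on-compacta convergence of the majorant series.

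First I would establish the $C^{\infty}$ claim together with (2.3) and (2.4) by showing that the formally differentiated series $\Theta^{(k)}(t)=\sum_{n}(-\lambda_{n})^{k}e^{-\lambda_{n}t}$ converges uniformly on every compact subset of $(0,\infty)$. The dominating series is $\sum_{n}|\lambda_{n}|^{k}e^{-\beta_{0}|\lambda_{n}|t}$, and here the standard trick is to fix any $\alpha\in(\rho(f),1)$ and write
\[
|\lambda_{n}|^{k}e^{-\beta_{0}|\lambda_{n}|t}=\frac{1}{|\lambda_{n}|^{\alpha}}\cdot|\lambda_{n}|^{k+\alpha}e^{-\beta_{0}|\lambda_{n}|t}\le\frac{C_{k,\alpha}}{|\lambda_{n}|^{\alpha}}(\beta_{0}t)^{-(k+\alpha)},
\]
using $\sup_{u\ge0}u^{k+\alpha}e^{-u}=:C_{k,\alpha}<\infty$. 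Summing and invoking (1.5) gives (2.3). For (2.4) I would split $\beta_{0}|\lambda_{n}|t=\beta t+(\beta_{0}|\lambda_{n}|-\beta)t$; since $\beta<\beta_{0}\inf_{n}|\lambda_{n}|$, the second factor is strictly positive in $t$, and after pulling out $e^{-\beta t}$ the same $\alpha$-device applied for $t$ beyond a fixed threshold $T$ shows the remaining series is bounded by a constant multiple of $\sum_{n}|\lambda_{n}|^{-\alpha}$.

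The bound (2.5) is Tonelli applied to a nonnegative integrand: the integral evaluates termwise to $\sum_{n}k!/(x+\Re(\lambda_{n}))^{k+1}$, and (1.6) together with $x\ge0$ gives $x+\Re(\lambda_{n})\ge\beta_{0}|\lambda_{n}|$; the series $\sum|\lambda_{n}|^{-(k+1)}$ converges because it reduces to (1.4) when $k=0$ and is otherwise dominated by it (since the entire function has no finite accumulation point of zeros, so $\inf|\lambda_{n}|>0$ and $|\lambda_{n}|\to\infty$). Finally, (2.6) follows by logarithmically differentiating (1.3) to obtain $f'(x)/f(x)=\sum_{n}1/(x+\lambda_{n})$, differentiating $k$ further times under the sum (again legal by the same uniform majorization), rewriting each $k!/(x+\lambda_{n})^{k+1}$ as the Laplace transform $\int_{0}^{\infty}t^{k}e^{-(x+\lambda_{n})t}dt$, and swapping sum and integral using (2.5) to recognize $\sum_{n}e^{-\lambda_{n}t}=\Theta(t)$.

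The one step requiring a bit of care is the $\alpha$-splitting majorization itself, since the exponent must simultaneously absorb the power $|\lambda_{n}|^{k}$ and leave behind a convergent $|\lambda_{n}|^{-\alpha}$ factor with a $t$-dependence sharper than any polynomial; the device $u^{k+\alpha}e^{-u}\in L^{\infty}(0,\infty)$ does exactly this, and once it is in place the rest of the lemma is bookkeeping on the single estimate $\Re(\lambda_{n})\ge\beta_{0}|\lambda_{n}|$ and the weighted $\ell^{\alpha}$-summability of $\{1/\lambda_{n}\}$.
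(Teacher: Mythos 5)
Your proposal is correct and follows essentially the same line as the paper's proof: the pointwise estimate $\Re(\lambda_n)\ge\beta_0|\lambda_n|$, the splitting device $\sup_{u>0}u^{k+\alpha}e^{-u}<\infty$ together with the weighted summability from (\ref{eq:1.5}) to justify termwise differentiation and obtain (\ref{eq:2.3}), pulling out $e^{-\beta t}$ for (\ref{eq:2.4}), Tonelli for (\ref{eq:2.5}), and logarithmic differentiation of (\ref{eq:1.3}) plus the Laplace identity $k!/(x+\lambda_n)^{k+1}=\int_0^\infty t^ke^{-(x+\lambda_n)t}\,dt$ for (\ref{eq:2.6}). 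The only cosmetic difference is in (\ref{eq:2.4}), where you reapply the $\alpha$-device for $t\ge T$ while the paper simply bounds $e^{-(1-\epsilon)\beta_0|\lambda_n|t}\le e^{-(1-\epsilon)\beta_0|\lambda_n|}$ for $t\ge1$; both yield the same conclusion.
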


\begin{proof}
Since for any $y>0$
\begin{equation}
\sup_{x>0}x^{y}e^{-x}=\left(\frac{y}{e}\right)^{y},\label{eq:2.7}
\end{equation}
 then for any $t>0,\,k\ge0$ by (\ref{eq:1.5}) and (\ref{eq:1.6}),
\begin{equation}
\begin{aligned} & \sum_{n=1}^{\infty}\left|\lambda_{n}^{k}e^{-\lambda_{n}t}\right|=\sum_{n=1}^{\infty}\left|\lambda_{n}\right|^{k}e^{-\Re(\lambda_{n})t}\le\sum_{n=1}^{\infty}\frac{\left(\beta_{0}t\left|\lambda_{n}\right|\right)^{k+\alpha}e^{-\beta_{0}\left|\lambda_{n}\right|t}}{(\beta_{0}t)^{\alpha+k}\left|\lambda_{n}\right|^{\alpha}}\\
 & \le\frac{\sup_{x>0}x^{k+\alpha}e^{-x}}{(\beta_{0}t)^{\alpha+k}}\sum_{n=1}^{\infty}\frac{1}{\left|\lambda_{n}\right|^{\alpha}}=\left(\frac{k+\alpha}{e\beta_{0}t}\right)^{k+\alpha}\sum_{n=1}^{\infty}\frac{1}{\left|\lambda_{n}\right|^{\alpha}}<\infty,
\end{aligned}
\label{eq:2.8}
\end{equation}
which proves that $\Theta(t)\in C^{\infty}(0,\infty)$ and (\ref{eq:2.3}). 

Let $0<\beta<\beta_{0}\inf\left\{ \left|\lambda_{n}\right|\bigg|n\in\mathbb{N}\right\} $,
then there exists a positive number $\epsilon$ with $0<\epsilon<1$
such that 
\begin{equation}
\beta=\epsilon\beta_{0}\inf\left\{ \left|\lambda_{n}\right|\bigg|n\in\mathbb{N}\right\} \le\epsilon\beta_{0}\left|\lambda_{n}\right|,\quad\forall n\in\mathbb{N}.\label{eq:2.9}
\end{equation}
Thus for $t\ge1$ by (\ref{eq:2.8}),
\begin{equation}
\begin{aligned} & \left|e^{\beta t}\Theta^{(k)}(t)\right|\le\sum_{n=1}^{\infty}\left|\lambda_{n}\right|^{k}e^{-(\beta_{0}\left|\lambda_{n}\right|-\beta)t}\le\sum_{n=1}^{\infty}\left|\lambda_{n}\right|^{k}e^{-(1-\epsilon)\beta_{0}\left|\lambda_{n}\right|t}\\
 & \le\sum_{n=1}^{\infty}\left|\lambda_{n}\right|^{k}e^{-(1-\epsilon)\beta_{0}\left|\lambda_{n}\right|}<\infty,
\end{aligned}
\label{eq:2.10}
\end{equation}
which establishes (\ref{eq:2.4}).

For all $x,k\ge0$ since
\begin{equation}
\begin{aligned} & \int_{0}^{\infty}t^{k}e^{-xt}\left|\Theta(t)\right|dt\le\int_{0}^{\infty}t^{k}\left(\sum_{n=1}^{\infty}e^{-(x+\Re(\lambda_{n}))t}\right)dt\le\sum_{n=1}^{\infty}\int_{0}^{\infty}t^{k}e^{-(x+\beta_{0}\left|\lambda_{n}\right|)t}dt\\
 & \le\sum_{n=1}^{\infty}\frac{k!}{\left(x+\beta_{0}\left|\lambda_{n}\right|\right)^{k+1}}\le\frac{k!}{\beta_{0}^{k+1}}\sum_{n=1}^{\infty}\frac{1}{\left|\lambda_{n}\right|^{k+1}}<\infty,
\end{aligned}
\label{eq:2.11}
\end{equation}
then,
\begin{equation}
\begin{aligned} & \left(\frac{f'(x)}{f(x)}\right)^{(k)}=\sum_{n=1}^{\infty}\frac{(-1)^{k}k!}{(x+\lambda_{n})^{k+1}}\\
 & =(-1)^{k}\sum_{n=1}^{\infty}\int_{0}^{\infty}t^{k}e^{-(x+\lambda_{n})t}dt=(-1)^{k}\int_{0}^{\infty}e^{-xt}t^{k}\Theta(t)dt.
\end{aligned}
\label{eq:2.12}
\end{equation}
\end{proof}
\begin{lem}
\label{lem:3}Let $f(z)$, $\Theta(x)$ and $\left\{ \lambda_{n}\right\} _{n=1}^{\infty}$
be defined as in Theorem \ref{thm:main} and Lemma \ref{lem:2}. Then
the sequence $\left\{ \lambda_{n}\right\} _{n=1}^{\infty}$ is positive
if and only if the function $\Theta(x)$ is completely monotonic on
$(0,\infty)$. 
\end{lem}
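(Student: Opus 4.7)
The plan is to handle the two directions separately, the reverse implication being by far the substantive one.

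For the easy direction, I assume every $\lambda_n$ is positive. Each summand $e^{-\lambda_n t}$ satisfies $(-1)^k \tfrac{d^k}{dt^k}e^{-\lambda_n t} = \lambda_n^k e^{-\lambda_n t} \ge 0$, and the majorization already produced in (\ref{eq:2.8}) of Lemma \ref{lem:2} justifies termwise differentiation. Consequently $(-1)^k\Theta^{(k)}(t)=\sum_{n=1}^\infty \lambda_n^k e^{-\lambda_n t}\ge 0$ for every $k\in\mathbb N_0$ and $t>0$, which is exactly complete monotonicity of $\Theta$.

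For the hard direction, I would invoke the Hausdorff--Bernstein--Widder theorem to represent a completely monotonic $\Theta$ as the Laplace transform of a positive Borel measure $\mu$ on $[0,\infty)$:
\[
\Theta(t)=\int_0^\infty e^{-ts}\,d\mu(s),\quad t>0.
\]
Plugging this representation into the identity (\ref{eq:2.6}) at $k=0$ and applying Fubini (the integrand is non-negative) turns the outer Laplace transform into a Stieltjes transform, which by (\ref{eq:2.6}) itself equals the partial-fraction sum:
\[
\sum_{n=1}^\infty\frac{1}{x+\lambda_n}=\int_0^\infty\frac{d\mu(s)}{x+s},\quad x>0.
\]

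I would then finish via analytic continuation. The right-hand side is analytic on $\mathbb C\setminus(-\mathrm{supp}(\mu))$, and in particular on the cut plane $\mathbb C\setminus(-\infty,0]$; local uniform convergence is a consequence of $\int_0^\infty d\mu(s)/(1+s)=\mathcal L\Theta(1)<\infty$, which follows from (\ref{eq:2.5}) at $k=0$, $x=1$. The left-hand side is meromorphic on $\mathbb C$ with a simple pole at every $-\lambda_n$, of residue equal to the multiplicity. Should some $\lambda_k$ fail to be positive, condition (\ref{eq:1.6}) forces $\Re(\lambda_k)>0$, so $\lambda_k$ is not real and $-\lambda_k$ lies in $\mathbb C\setminus(-\infty,0]$. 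The Stieltjes integral is analytic at that point, while the sum has a pole there — contradiction. Therefore every $\lambda_n\in(0,\infty)$.

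The main obstacle will be the analytic-continuation step: one must verify carefully that the Stieltjes transform is genuinely holomorphic on $\mathbb C\setminus(-\mathrm{supp}(\mu))$, and that $\mathrm{supp}(\mu)\subseteq[0,\infty)$ guarantees the singularity sets of the two representations are incompatible whenever a non-real $\lambda_k$ exists. All remaining ingredients — Bernstein's representation, Fubini's theorem, and termwise differentiation — are already licensed by the bounds established in Lemma \ref{lem:2}.
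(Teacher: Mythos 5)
Your overall route matches the paper's: easy direction via termwise differentiation, hard direction via Hausdorff--Bernstein--Widder, then the Stieltjes-transform identity
\[
\sum_{n=1}^\infty\frac{1}{x+\lambda_n}=\int_0^\infty\frac{d\mu(y)}{x+y},\qquad x>0,
\]
followed by analytic continuation and a contradiction at any non-real pole $-\lambda_{n_0}$. Both proofs are correct. Where you genuinely diverge from the paper is in how you establish that the Stieltjes transform $s(z)=\int_0^\infty\frac{d\mu(y)}{z+y}$ is holomorphic on the cut plane. The paper first derives, via Fatou's lemma applied to $f'/f$ and its derivative as $x\downarrow 0$, the stronger moment conditions $\int_0^\infty d\mu(y)/y<\infty$ and $\int_0^\infty d\mu(y)/y^2<\infty$ (equations (\ref{eq:2.16}) and (\ref{eq:2.20})); it then proves analyticity separately on $\{\Re z>0\}$ and on $\{\Re z<0,\ \Im z\neq 0\}$ with region-specific bounds, shows one-sided continuity up to the imaginary axis, and patches across $\Re z=0$ by Morera's theorem. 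Your argument sidesteps all of that: $\int_0^\infty d\mu(y)/(1+y)<\infty$ comes for free from Tonelli and (\ref{eq:2.5}) at $k=0$, $x=1$, and the elementary bound $|z+y|\ge c_K(1+y)$ for $y\ge 0$ and $z$ in any compact $K\subset\mathbb C\setminus(-\infty,0]$ (using $\mathrm{dist}(K,(-\infty,0])>0$ and $\sup_K|z|<\infty$) gives locally uniform convergence of the truncated integrals $\int_0^N d\mu(y)/(z+y)$, hence holomorphy of $s$ on the whole cut plane in one step, with no Fatou, no splitting into half-planes, and no Morera. This is a real simplification; the only thing to flag is that you should explicitly state the uniform lower bound $|z+y|\ge c_K(1+y)$, since that is the entire content of the ``local uniform convergence'' claim. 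The remainder — identity theorem on the connected domain $\mathbb C\setminus\bigl((-\infty,0]\cup\{-\lambda_n\}\bigr)$, then comparing a removable point of $s$ with a pole of $f'/f$ at $-\lambda_{n_0}$ — is exactly the paper's closing contradiction.
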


\begin{proof}
Assume that $\left\{ \lambda_{n}\right\} _{n=1}^{\infty}$ is positive,
then 
\begin{equation}
(-1)^{k}\Theta^{(k)}(t)=\sum_{n=1}^{\infty}\lambda_{n}^{k}e^{-\lambda_{n}t}\ge0,\quad\forall t>0,k\in\mathbb{N}_{0},\label{eq:2.13}
\end{equation}
 hence $\Theta(t)$ is completely monotonic by definition.

On the other hand if $\Theta(t)$ is completely monotonic on $(0,\infty)$,
then by the Bernstein's theorem there exists a nonnegative measure
$d\mu(t)$ on $[0,\infty)$ such that \cite{SchillingSongVondracek,WidderBook}
\begin{equation}
\Theta(t)=\sum_{n=1}^{\infty}e^{-\lambda_{n}t}=\int_{0}^{\infty}e^{-yt}d\mu(y),\quad\forall t>0.\label{eq:2.14}
\end{equation}
For any $x\ge0$ we multiply $e^{-xt}$ both sides of (\ref{eq:2.14})
and integrate it with respect to $t$ from $0$ to $\infty$ to get
\begin{equation}
\begin{aligned} & \int_{0}^{\infty}e^{-xt}\Theta(t)dt=\sum_{n=1}^{\infty}\frac{1}{x+\lambda_{n}}=\frac{f'(x)}{f(x)}\\
 & =\int_{0}^{\infty}e^{-xt}\left(\int_{0}^{\infty}e^{-yt}d\mu(y)\right)dx=\int_{0}^{\infty}\frac{d\mu(y)}{x+y}.
\end{aligned}
\label{eq:2.15}
\end{equation}
The exchange of orders between summation and integration, and between
integrations are guaranteed by (\ref{eq:2.5}), (\ref{eq:2.6}) and
Fubini's theorem.

By Fatou's Lemma and (\ref{eq:2.5}), (\ref{eq:2.6}) and (\ref{eq:2.15}),
\begin{equation}
\begin{aligned} & 0<\int_{0}^{\infty}\frac{d\mu(y)}{y}=\int_{0}^{\infty}\liminf_{x\downarrow0}\frac{d\mu(y)}{x+y}\le\liminf_{x\downarrow0}\int_{0}^{\infty}\frac{d\mu(y)}{x+y}\\
 & =\liminf_{x\downarrow0}\left|\frac{f'(x)}{f(x)}\right|\le\liminf_{x\downarrow0}\left|\sum_{n=1}^{\infty}\frac{1}{x+\Re(\lambda_{n})}\right|\le\frac{1}{\beta_{0}}\sum_{n=1}^{\infty}\frac{1}{\left|\lambda_{n}\right|}.
\end{aligned}
\label{eq:2.16}
\end{equation}
For $x,y>0$ since $\frac{1}{(x+y)^{2}}<\frac{1}{4xy}$ then,
\begin{equation}
\int_{0}^{\infty}\frac{d\mu(y)}{(x+y)^{2}}\le\frac{1}{4x}\int_{0}^{\infty}\frac{d\mu(y)}{y}<\infty.\label{eq:2.17}
\end{equation}
For $x,y>0,\ \left|h\right|<\frac{x}{2}$ by (\ref{eq:2.15}), Lebesgue's
dominated convergence theorem and the inequality
\begin{equation}
\frac{2}{\left|(x+y)(x+y+h)\right|}\le\frac{1}{(x+y)^{2}}+\frac{1}{(x/2+y)^{2}}\label{eq:2.18}
\end{equation}
we obtain
\begin{equation}
\begin{aligned} & -\frac{d}{dx}\left(\frac{f'(x)}{f(x)}\right)=-\frac{d}{dx}\int_{0}^{\infty}\frac{d\mu(y)}{x+y}\\
 & =\lim_{h\to0}\frac{1}{h}\int_{0}^{\infty}\left(\frac{1}{x+y}-\frac{1}{x+y+h}\right)d\mu(y)\\
 & =\lim_{h\to0}\int_{0}^{\infty}\frac{d\mu(y)}{(x+y)(x+y+h)}=\int_{0}^{\infty}\frac{d\mu(y)}{(x+y)^{2}}.
\end{aligned}
\label{eq:2.19}
\end{equation}
Then by Fatou's Lemma and (\ref{eq:2.6}),
\begin{equation}
\begin{aligned} & 0<\int_{0}^{\infty}\frac{d\mu(y)}{y^{2}}=\int_{0}^{\infty}\liminf_{x\downarrow0}\frac{d\mu(y)}{(x+y)^{2}}\le\liminf_{x\downarrow0}\int_{0}^{\infty}\frac{d\mu(y)}{(x+y)^{2}}\\
 & =\liminf_{x\downarrow0}\left|\frac{d}{dx}\left(\frac{f'(x)}{f(x)}\right)\right|\le\liminf_{x\downarrow0}\left|\sum_{n=1}^{\infty}\frac{1}{(x+\Re(\lambda_{n}))^{2}}\right|\le\frac{1}{\beta_{0}^{2}}\sum_{n=1}^{\infty}\frac{1}{\left|\lambda_{n}\right|^{2}}.
\end{aligned}
\label{eq:2.20}
\end{equation}

Let 
\begin{equation}
s(z)=\int_{0}^{\infty}\frac{d\mu(y)}{z+y},\label{eq:2.21}
\end{equation}
we first show that $s(z)$ is continuous and bounded on $\Re(z)\ge0$
and analytic in $\Re(z)>0$. 

For $\Re(z)\ge0$ since 
\begin{equation}
\int_{0}^{\infty}\frac{d\mu(y)}{\left|z+y\right|}\le\int_{0}^{\infty}\frac{d\mu(y)}{\Re(z)+y}\le\int_{0}^{\infty}\frac{d\mu(y)}{y}<\infty\label{eq:2.22}
\end{equation}
and
\begin{equation}
\int_{0}^{\infty}\frac{d\mu(y)}{\left|z+y\right|^{2}}\le\int_{0}^{\infty}\frac{d\mu(y)}{(\Re(z)+y)^{2}}\le\int_{0}^{\infty}\frac{d\mu(y)}{y^{2}}<\infty,\label{eq:2.23}
\end{equation}
then both integrals
\begin{equation}
\int_{0}^{\infty}\frac{d\mu(y)}{z+y},\quad\int_{0}^{\infty}\frac{d\mu(y)}{(z+y)^{2}}\label{eq:2.24}
\end{equation}
converge absolutely for $\Re(z)\ge0$ and uniformly on any of its
compact subsets. Thus they are continuous on $\Re(z)\ge0$ by (\ref{eq:2.22})
and (\ref{eq:2.23}). Similar to the proof of (\ref{eq:2.19}) we
can show that 
\begin{equation}
s'(z)=-\int_{0}^{\infty}\frac{d\mu(y)}{(z+y)^{2}},\quad\text{\ensuremath{\Re(z)>0,} }\label{eq:2.25}
\end{equation}
 Then $s(z)$ is analytic in $\Re(z)>0$.

Next we prove that $s(z)$ is continuous on any of its compact subsets
of $\left\{ z\vert\Re(z)\le0,\,\Im(z)\neq0\right\} $ and analytic
in $\left\{ z\vert\Re(z)<0,\,\Im(z)\neq0\right\} $. 

Let $\Re(z)=-a\le0$ and $b=\Im(z)\neq0$, then for any positive number
$\epsilon>0$,
\begin{equation}
\begin{aligned} & \int_{0}^{\infty}\frac{d\mu(y)}{\left|(y-a)+bi\right|}=\int_{0}^{a+\epsilon}\frac{d\mu(y)}{\left|(y-a)+bi\right|}+\int_{a+\epsilon}^{\infty}\frac{d\mu(y)}{\left|(y-a)+bi\right|}\\
 & \le\frac{1}{\left|b\right|}\int_{0}^{a+\epsilon}\frac{yd\mu(y)}{y}+\int_{a+\epsilon}^{\infty}\frac{y}{(y-a)}\frac{d\mu(y)}{y}\\
 & \le\frac{a+\epsilon}{\left|b\right|}\int_{0}^{a+\epsilon}\frac{d\mu(y)}{y}+\int_{a+\epsilon}^{\infty}\frac{y-a+a}{y-a}\frac{d\mu(y)}{y}\\
 & \le\frac{a+\epsilon}{\left|b\right|}\int_{0}^{a+\epsilon}\frac{d\mu(y)}{y}+\left(1+\frac{a}{\epsilon}\right)\int_{a+\epsilon}^{\infty}\frac{d\mu(y)}{y}\\
 & \le\left(1+\frac{a}{\epsilon}+\frac{a+\epsilon}{\left|b\right|}\right)\int_{0}^{\infty}\frac{d\mu(y)}{y}<\infty
\end{aligned}
\label{eq:2.26}
\end{equation}
and
\begin{equation}
\begin{aligned} & \int_{0}^{\infty}\frac{d\mu(y)}{\left|(y-a)+bi\right|^{2}}\le\frac{1}{b^{2}}\int_{0}^{a+\epsilon}\frac{y^{2}d\mu(y)}{y^{2}}+\int_{a+\epsilon}^{\infty}\frac{(y-a+a)^{2}}{(y-a)^{2}}\frac{d\mu(y)}{y^{2}}\\
 & \le\left(\frac{a+\epsilon}{b}\right)^{2}\int_{0}^{a+\epsilon}\frac{d\mu(y)}{y^{2}}+\int_{a+\epsilon}^{\infty}\left(1+\frac{a}{y-a}\right)^{2}\frac{d\mu(y)}{y^{2}}\\
 & \le\left(\frac{a+\epsilon}{b}\right)^{2}\int_{0}^{a+\epsilon}\frac{d\mu(y)}{y^{2}}+\left(1+\frac{a}{\epsilon}\right)^{2}\int_{a+\epsilon}^{\infty}\frac{d\mu(y)}{y^{2}}\\
 & \le\left(\left(\frac{a+\epsilon}{b}\right)^{2}+\left(1+\frac{a}{\epsilon}\right)^{2}\right)\int_{0}^{\infty}\frac{d\mu(y)}{y^{2}}<\infty.
\end{aligned}
\label{eq:2.27}
\end{equation}
Then both integrals 
\begin{equation}
\int_{0}^{\infty}\frac{d\mu(y)}{y+z},\quad\int_{0}^{\infty}\frac{d\mu(y)}{\left(y+z\right)^{2}}\label{eq:2.28}
\end{equation}
converge absolutely and uniformly on any of the compact subsets of
$\left\{ z\vert\Re(z)\le0,\,\Im(z)\neq0\right\} $. Similar to the
case $\Re(z)\ge0$, the above estimates imply that $s(z)$ is continuous
on $\left\{ z\vert\Re(z)\le0,\,\Im(z)\neq0\right\} $, analytic in
$\left\{ z\vert\Re(z)<0,\,\Im(z)\neq0\right\} $.

For any $z_{0}$ with $\Re(z_{0})=0$ and $\Im(z_{0})\neq0$, there
exists a positive number $\epsilon>0$ such that the disk $D(z_{0},\epsilon)=\left\{ z:\left|z-z_{0}\right|<\epsilon\right\} $
is completely contained in $\left\{ z:z\in\mathbb{C}\backslash\left(-\infty,0\right],\,\Im(z)>0\right\} $
or $\left\{ z:z\in\mathbb{C}\backslash\left(-\infty,0\right],\,\Im(z)<0\right\} $.
Without losing any generality we may assume that $\Im(z_{0})>0$ and
$D(z_{0},\epsilon)\subset\left\{ z:z\in\mathbb{C}\backslash\left(-\infty,0\right],\,\Im(z)>0\right\} $,
and the other case can be treated similarly.

We first observe that $s(z)$ is continuous on the closure of $D(z_{0},\epsilon/3)$
except possibly on its intersection with the imaginary axis $\Re(z)=0$
where it has only one-sided continuity. For any $z$ in the intersection,
let $\left\{ z_{j}\right\} $ be a sequence in the closure of $D(z_{0},\epsilon/3)$
that converges to $z$. If all $\left\{ z_{j}\right\} $ except finitely
many are completely inside one of $\Re(z)\ge0$ and $\left\{ z\vert\Re(z)\le0,\,\Im(z)\ge\epsilon/3\right\} $,
then by the one-sided continuity we have $s(z_{j})\to s(z)$ as $j\to\infty$.
Otherwise, the sequence can be decomposed into two converging subsequences
$\left\{ z_{j_{1}}\right\} $ and $\left\{ z_{j_{2}}\right\} $, which
are completely contained inside in $\Re(z)\ge0$ and $\left\{ z\vert\Re(z)\le0,\,\Im(z)\ge\epsilon/3\right\} $
respectively. However, by the one-sided continuities proved on both
sets we have $s(z_{j_{k}})\to s(z)$ as $j_{k}\to\infty$ for $k=1,2$.
Therefore, $s(z)$ is continuous on the closure of $D(z_{0},\epsilon/3)$.

Given any closed piecewise $C^{1}$ curve $\gamma$ in $D(z_{0},\epsilon/3)$.
If it does not intersect the imaginary axis $\Re(z)=0$, then it's
completely contained in one of two disjoint regions, $\Re(z)>0$ and
$\left\{ z\vert\Re(z)<0,\,\Im(z)>0\right\} $. From the above discussion
$s(z)$ is analytic in both of them, then by Cauchy's integration
theorem we have $\oint_{\gamma}s(z)dz=0$. Otherwise, $\gamma$ can
be decomposed into a sum of closed piecewise $C^{1}$ curves $\left\{ \gamma_{j}\right\} $
in $D(z_{0},\epsilon/3)$ by adding the line segments with opposed
directions on $\Re(z)=0$ enclosed by $\gamma$. Then $s(z)$ is continuous
on the closed sets enclosed by each $\gamma_{j}$ and analytic in
their interiors. By a weaker version of Cauchy's integration theorem
for each $j$ we have $\oint_{\gamma_{j}}s(z)dz=0$ and \cite{Ahlfors}
\begin{equation}
\oint_{\gamma}s(z)dz=\sum_{j}\oint_{\gamma_{j}}s(z)dz=\sum_{j}0=0.\label{eq:2.29}
\end{equation}
Then by Morera's theorem $s(z)$ is analytic in $D(z_{0},\epsilon/3)$,
in particular at $z_{0}$. 

In the summary of above discussion we have proved that $s(z)$ is
continuous on $\mathbb{C}\backslash\left(-\infty,0\right)$ and analytic
in $\mathbb{C}\backslash\left(-\infty,0\right]$. 

Since the meromorphic function
\begin{equation}
\frac{f'(z)}{f(z)}=\sum_{n=1}^{\infty}\frac{1}{z+\lambda_{n}}\label{eq:2.30}
\end{equation}
is analytic in $\mathbb{C}\backslash\left\{ -\lambda_{n}\right\} _{n\in\mathbb{N}}$,
and $s(z)$ is analytic in $\mathbb{C}\backslash\left(-\infty,0\right]$,
while by (\ref{eq:2.15}) they are equal on $z\ge0$, then we must
have
\begin{equation}
\frac{f'(z)}{f(z)}=s(z),\quad z\in\mathbb{C}\backslash\left(\left\{ -\lambda_{n}\right\} _{n\in\mathbb{N}}\cup\left(-\infty,0\right]\right)\label{eq:2.31}
\end{equation}
 by analytic continuation.

If there exists an $n_{0}\in\mathbb{N}$ such that $\lambda_{n_{0}}$
is not positive. Since $\Re(\lambda_{n_{0}})>0$ and $-\lambda_{n_{0}}$
is not negative, then $s(z)$ is analytic at $-\lambda_{n_{0}}$.
Hence, 
\begin{equation}
\lim_{z\to-\lambda_{n_{0}}}s(z)=s\left(-\lambda_{n_{0}}\right)=\int_{0}^{\infty}\frac{d\mu(y)}{y-\lambda_{n_{0}}}\neq\infty.\label{eq:2.32}
\end{equation}
On the other hand since $-\lambda_{n_{0}}$ is a zero of $f(z)$,
then it is a simple pole of $\frac{f'(z)}{f(z)}$ with a positive
residue. Hence, 
\begin{equation}
\lim_{z\to-\lambda_{n_{0}}}\frac{f'(z)}{f(z)}=\infty.\label{eq:2.33}
\end{equation}
Combining (\ref{eq:2.32}) and (\ref{eq:2.33}) we have reached the
following contradiction,
\begin{equation}
\infty\neq s\left(-\lambda_{n_{0}}\right)=\lim_{z\to-\lambda_{n_{0}}}s(z)=\lim_{z\to-\lambda_{n_{0}}}\frac{f'(z)}{f(z)}=\infty.\label{eq:2.34}
\end{equation}
This contradiction proves that $\left\{ \lambda_{n}\right\} _{n\in\mathbb{N}}$
must be a positive sequence.
\end{proof}

\subsection{Proof of Theorem }
\begin{proof}
For $k,m\in\mathbb{N}_{0},\,t>0$ and $z$ with $\Re(z)>0$ we define
\begin{equation}
\Theta_{k}(t)=(-t)^{k}\Theta^{(k)}(t),\quad G_{k}^{(m)}(z)=\int_{0}^{\infty}e^{-zt}t^{m}\Theta_{k}(t)dt,\quad\forall k\in\mathbb{N}\label{eq:2.35}
\end{equation}
and
\begin{equation}
\Theta_{0}(t)=\Theta(t),\quad G_{0}^{(m)}(z)=\int_{0}^{\infty}e^{-zt}t^{m}\Theta(t)dt=(-1)^{m}\frac{d^{m}}{dz^{m}}\left(\frac{f'(z)}{f(z)}\right).\label{eq:2.36}
\end{equation}
For any $k\in\mathbb{N}_{0}$ and $t>0$, it is clear that $(-1)^{k}\Theta^{(k)}(t)\ge0$
if and only if $\Theta_{k}(t)\ge0$. By (\ref{eq:2.3}), (\ref{eq:2.4})
and (\ref{eq:2.6}) $G_{k}^{(m)}(x)\in C^{\infty}(0,\infty)$ exists
for all nonnegative integers $k,m$. 

For each $k,m\in\mathbb{N}_{0}$, apply the Bernstein theorem to $G_{k}^{(m)}(x)$
we get that $\Theta_{k}(t)\ge0$ if and only if $G_{k}^{(m)}(x)$
is completely monotonic on $(0,\infty)$.

For any $k\in\mathbb{N}$ and $x>0$, by (\ref{eq:2.3}), (\ref{eq:2.4})
and integration by parts, 
\begin{equation}
\begin{aligned} & G_{k}^{(m)}(x)=\int_{0}^{\infty}e^{-xt}t^{m}\Theta_{k}(t)dt=\left(-1\right)^{k}\int_{0}^{\infty}t^{k+m}e^{-xt}\frac{d}{dt}\Theta^{(k-1)}(t)dt\\
 & =\int_{0}^{\infty}\left(k+m-xt\right)e^{-xt}t^{m}\Theta_{k-1}(t)dt=(k+m)G_{k-1}^{(m)}(x)+x\frac{d}{dx}G_{k-1}^{(m)}(x)\\
 & =x^{1-k-m}\frac{d}{dx}\left(x^{k+m}G_{k-1}^{(m)}(x)\right)=\frac{1}{x^{k+m}}\left(x\frac{d}{dx}x\right)\left(x^{k-1+m}G_{k-1}^{(m)}(x)\right),
\end{aligned}
\label{eq:2.37}
\end{equation}
then,
\begin{equation}
x^{k+m}G_{k}^{(m)}(x)=\left(x\frac{d}{dx}x\right)\left(x^{k-1+m}G_{k-1}^{(m)}(x)\right)=\dots=\left(x\frac{d}{dx}x\right)^{k}\left((-x)^{m}\frac{d^{m}}{dx^{m}}\left(\frac{f'(x)}{f(x)}\right)\right),\label{eq:2.38}
\end{equation}
it leads to 
\begin{equation}
G_{k}^{(m)}(x)=\frac{\left(-1\right)^{m}}{x^{m+k}}\left(x\frac{d}{dx}x\right)^{k}\left(x^{m}\frac{d^{m}}{dx^{m}}\left(\frac{f'(x)}{f(x)}\right)\right).\label{eq:2.39}
\end{equation}
 On the other hand, for any $k\in\mathbb{N}$ and $x>0$ by (\ref{eq:2.3}),
(\ref{eq:2.4}) and integration by parts, 
\begin{equation}
\begin{aligned} & G_{k}^{(m)}(x)=\int_{0}^{\infty}e^{-xt}t^{m}\Theta_{k}(t)dt=\int_{0}^{\infty}e^{-xt}(-t)^{k}t^{m}\Theta^{(k)}(t)dt\\
 & =\int_{0}^{\infty}\frac{d^{k}}{dt^{k}}\left(e^{-xt}t^{k+m}\right)\Theta(t)dt=\int_{0}^{\infty}\left(e^{y}y^{-m}\frac{d^{k}}{dy^{k}}\left(e^{-y}y^{k+m}\right)\right)\bigg|_{y=xt}e^{-xt}t^{m}\Theta(t)dt\\
 & =k!\int_{0}^{\infty}\left(\frac{e^{y}y^{-m}}{k!}\frac{d^{k}}{dy^{k}}\left(e^{-y}y^{k+m}\right)\right)\bigg|_{y=xt}e^{-xt}t^{m}\Theta(t)dt=k!\int_{0}^{\infty}L_{k}^{(m)}(xt)e^{-xt}t^{m}\Theta(t)dt,
\end{aligned}
\label{eq:2.40}
\end{equation}
which gives
\begin{equation}
G_{k}^{(m)}(x)=k!\int_{0}^{\infty}L_{k}^{(m)}(xt)e^{-xt}t^{m}\Theta(t)dt,\label{eq:2.41}
\end{equation}
where we have applied the Rodrigues formula for Laguerre polynomials,\cite{AndrewsAskeyRoy,Ismail}

\begin{equation}
{\displaystyle L_{n}^{(\alpha)}(x)=\frac{x^{-\alpha}e^{x}}{n!}\frac{d^{n}}{dx^{n}}\left(e^{-x}x^{n+\alpha}\right).}\label{eq:2.42}
\end{equation}
Since
\begin{equation}
L_{k}^{(m)}(xt)=\frac{e^{xt}}{k!x^{m}}\frac{d^{k}}{dx^{k}}\left(e^{-tx}x{}^{k+m}\right),\label{eq:2.43}
\end{equation}
then
\begin{equation}
\begin{aligned} & G_{k}^{(m)}(x)=k!\int_{0}^{\infty}L_{k}^{(m)}(xt)e^{-xt}t^{m}\Theta(t)dt=x^{-m}\int_{0}^{\infty}\frac{d^{k}}{dx^{k}}\left(e^{-tx}x{}^{k+m}\right)t^{m}\Theta(t)dt\\
 & =x^{-m}\frac{d^{k}}{dx^{k}}x^{k+m}\int_{0}^{\infty}e^{-tx}t^{m}\Theta(t)dt=\left(-\frac{1}{x}\right)^{m}\frac{d^{k}}{dx^{k}}\left(x^{k+m}\frac{d^{m}}{dx^{m}}\left(\frac{f'(x)}{f(x)}\right)\right).
\end{aligned}
\label{eq:2.44}
\end{equation}
\end{proof}

\section{An Application}

The Riemann $\xi$-function is defined by \cite{AndrewsAskeyRoy,Edwards},
\begin{equation}
\xi(s)=\pi^{-s/2}(s-1)\Gamma\left(1+\frac{s}{2}\right)\zeta(s),\quad s=\sigma+it,\ \sigma,t\in\mathbb{R},\label{eq:3.1}
\end{equation}
where $\Gamma(s)$, $\zeta(s)$ are the respective analytic continuations
of
\begin{equation}
\Gamma(s)=\int_{0}^{\infty}e^{-x}x^{s-1}dx,\quad\sigma>0\label{eq:3.2}
\end{equation}
and 
\begin{equation}
\zeta(s)=\sum_{n=1}^{\infty}\frac{1}{n^{s}},\quad\sigma>1.\label{eq:3.3}
\end{equation}
It is well-known that $\xi(s)$ is an order $1$ entire function that
satisfies $\xi(s)=\xi(1-s)$, and it has infinitely many zeros, all
of them are in the vertical strip $\sigma\in(0,1)$. Then the Riemann
Xi function $\Xi(s)=\xi\left(\frac{1}{2}+is\right)$ is an even entire
function of order $1$, and it has infinitely many zeros, all of them
are in the horizontal strip $t\in(-1/2,1/2)$. The Riemann hypothesis
is that all the zeros of $\xi(s)$ are on $\sigma=\frac{1}{2},$or
equivalently, all the zeros $\left\{ \pm z_{n}\right\} _{n=1}^{\infty}$
of $\Xi(s)$ are real where $z_{1}\approx14.1347$.

Let
\begin{equation}
\lambda_{n}=z_{n}^{2},\quad n\in\mathbb{N},\label{eq:3.4}
\end{equation}
 then it is know that for any $\epsilon>0$, \cite{Davenport,Edwards}
\begin{equation}
\sum_{n=1}^{\infty}\frac{1}{\left|\lambda_{n}\right|^{1/2+\epsilon}}<\infty.\label{eq:3.5}
\end{equation}
Let 
\begin{equation}
\Theta_{\Xi}(t)=\sum_{n=1}^{\infty}e^{-\lambda_{n}^{2}t},\quad\forall t>0,\label{eq:3.6}
\end{equation}
then for all $k\in\mathbb{N}_{0}$ by Lemma \ref{lem:2} we have
\begin{equation}
\Theta_{\Xi}^{(k)}(t)=\mathcal{O}\left(t^{-1/2-\epsilon-k}\right),\quad t\downarrow0.\label{eq:3.7}
\end{equation}
 Since \cite{Edwards}
\begin{equation}
\Xi(s)=\int_{-\infty}^{\infty}\Phi(u)e^{ius}du=2\int_{0}^{\infty}\Phi(u)\cos(us)du,\label{eq:3.8}
\end{equation}
where 
\begin{equation}
\Phi(u)=\Phi(-u)=\sum_{n=1}^{\infty}\left(4n^{4}\pi^{2}e^{9u/2}-6n^{2}\pi e^{5u/2}\right)e^{-n^{2}\pi e^{2u}}>0.\label{eq:3.9}
\end{equation}
Then,
\begin{equation}
\xi\left(\frac{1}{2}+s\right)=\sum_{n=0}^{\infty}a_{n}s^{2n},\quad a_{n}=\frac{2}{(2n)!}\int_{0}^{\infty}\Phi(u)u^{2n}du>0\label{eq:3.10}
\end{equation}
and
\begin{equation}
f(s)=\xi\left(\frac{1}{2}+\sqrt{s}\right)=\sum_{n=0}^{\infty}a_{n}s^{n}\label{eq:3.11}
\end{equation}
defines an entire function of order $\frac{1}{2}$, and $\Xi(s)$
has only real roots if and only if $f(s)$ has only negative roots.
Apply Theorem \ref{thm:main} to $f(s)$ we obtain the following corollary:
\begin{cor}
\label{cor:5}The Riemann hypothesis is true if and only if there
exists an nonnegative integer $m$, all the functions
\begin{equation}
\left(-\frac{1}{x}\right)^{m}\frac{d^{k}}{dx^{k}}\left(x^{k+m}\frac{d^{m}}{dx^{m}}\left(\frac{\xi'\left(\frac{1}{2}+\sqrt{x}\right)}{\sqrt{x}\xi\left(\frac{1}{2}+\sqrt{x}\right)}\right)\right),\quad k\in\mathbb{N}_{0}\label{eq:3.12}
\end{equation}
are completely monotonic on $(0,\infty)$.
\end{cor}

For a primitive Dirichlet character $\chi(n)$ modulo $q$, let \cite{AndrewsAskeyRoy,Apostol,Davenport,Edwards}
\begin{equation}
\xi(s,\chi)=\left(\frac{q}{\pi}\right)^{(s+\kappa)/2}\Gamma\left(\frac{s+\kappa}{2}\right)L(s,\chi),\label{eq:3.13}
\end{equation}
where $\kappa$ is the parity of $\chi$ and $L(s,\chi)$ is the analytic
continuation of 
\begin{equation}
L\left(s,\chi\right)=\sum_{n=1}^{\infty}\frac{\chi(n)}{n^{s}},\quad\sigma>1.\label{eq:3.14}
\end{equation}
Then $\xi(s,\chi)$ is an entire function of order $1$ such that
\cite{Davenport}
\begin{equation}
\xi(s,\chi)=\epsilon(\chi)\xi(1-s,\overline{\chi}),\label{eq:3.15}
\end{equation}
where 
\begin{equation}
\epsilon(\chi)=\frac{\tau(\chi)}{i^{\kappa}\sqrt{q}},\quad\tau(\chi)=\sum_{n=1}^{q}\chi(n)\exp\left(\frac{2\pi in}{q}\right).\label{eq:3.16}
\end{equation}
Let 
\begin{equation}
G(s,\chi)=\xi\left(s,\chi\right)\cdot\xi\left(s,\overline{\chi}\right),\label{eq:3.17}
\end{equation}
then
\begin{equation}
G(s,\chi)=\epsilon\left(\chi\right)\cdot\epsilon\left(\overline{\chi}\right)G(1-s,\chi).\label{eq:3.18}
\end{equation}
Since \cite{Davenport}
\begin{equation}
\tau\left(\overline{\chi}\right)=\overline{\tau(\chi)},\quad\left|\tau(\chi)\right|=\sqrt{q},\label{eq:3.19}
\end{equation}
then
\begin{equation}
G(s,\chi)=G(1-s,\chi).\label{eq:3.20}
\end{equation}
Since the entire function $\xi\left(\frac{1}{2}+is,\chi\right)$ has
an integral representation, \cite{Davenport}
\begin{equation}
\xi\left(\frac{1}{2}+is,\chi\right)=\int_{-\infty}^{\infty}e^{isy}\varphi\left(y,\chi\right)dy,\label{eq:3.21}
\end{equation}
where
\begin{equation}
\varphi(y,\chi)=2\sum_{n=1}^{\infty}n^{\kappa}\chi(n)\exp\left(-\frac{n^{2}\pi}{q}e^{2y}+\left(\kappa+\frac{1}{2}\right)y\right),\label{eq:3.22}
\end{equation}
then
\begin{equation}
\xi\left(\frac{1}{2}+is,\chi\right)=\sum_{n=0}^{\infty}i^{n}a_{n}(\chi)s^{n},\quad a_{n}(\chi)=\int_{-\infty}^{\infty}y^{n}\varphi\left(y,\chi\right)dy.\label{eq:3.23}
\end{equation}
It is known that the fast decreasing smooth function $\varphi(y,\chi)$
satisfies the functional equation \cite{Davenport}
\begin{equation}
\varphi(y,\chi)=\frac{i^{\kappa}\sqrt{q}}{\tau\left(\overline{\chi}\right)}\varphi(-y;\overline{\chi}),\quad y\in\mathbb{R},\label{eq:3.24}
\end{equation}
then for all $n\in\mathbb{N}_{0}$,
\begin{equation}
\begin{aligned} & a_{n}(\overline{\chi})=\int_{-\infty}^{\infty}y^{n}\varphi\left(y,\overline{\chi}\right)dy=(-1)^{n}\int_{-\infty}^{\infty}y^{n}\varphi\left(-y,\overline{\chi}\right)dy\\
 & =\frac{(-1)^{n}\tau\left(\overline{\chi}\right)}{i^{\kappa}\sqrt{q}}\int_{-\infty}^{\infty}y^{n}\varphi\left(y,\chi\right)dy=\frac{(-1)^{n}\tau\left(\overline{\chi}\right)}{i^{\kappa}\sqrt{q}}a_{n}(\chi).
\end{aligned}
\label{eq:3.25}
\end{equation}
Let
\begin{equation}
f(s,\chi)=s^{-2\mu}G\left(\frac{1}{2}+is,\chi\right)=s^{-2\mu}\xi\left(\frac{1}{2}+is,\chi\right)\cdot\xi\left(\frac{1}{2}+is,\overline{\chi}\right),\label{eq:3.26}
\end{equation}
where $\mu\in\mathbb{N}_{0}$ is the least nonnegative integer such
that $a_{\mu}(\chi)\neq0$, then the even entire function $f(s,\chi)$
has the series expansion
\begin{equation}
f(s,\chi)=f(-s,\chi)=\sum_{n=0}^{\infty}(-1)^{n}b_{n}(\chi)s^{2n},\label{eq:3.27}
\end{equation}
where $b_{0}(\chi)=\frac{(-1)^{\mu}\tau\left(\overline{\chi}\right)}{i^{\kappa}\sqrt{q}}a_{\mu}^{2}(\chi)$
and for $n\in\mathbb{N}$,
\begin{equation}
\begin{aligned} & b_{n}(\chi)=\sum_{j=0}^{2n}a_{j+\mu}(\chi)a_{2n-j+\mu}(\overline{\chi})\\
 & =\frac{(-1)^{\mu}\tau\left(\overline{\chi}\right)}{i^{\kappa}\sqrt{q}}\sum_{j=0}^{2n}(-1)^{j}a_{j+\mu}(\chi)a_{2n-j+\mu}(\chi).
\end{aligned}
\label{eq:3.28}
\end{equation}
It is well-known that $\xi(s,\chi)$ is an order $1$ entire function
with infinitely many zeros, all of them are in the horizontal strip
$t\in(-1/2,1/2)$, \cite{Davenport}. Then $f(s,\chi)$ is an order
$1$ even entire function with infinitely many zeros, all of them
are in the horizontal strip $t\in(-1/2,1/2)$. Clearly, all the zeros
of $\xi(s,\chi)$ on the critical line $\sigma=\frac{1}{2}$ if and
only if all the zeros of $f(s,\chi)$ are real. Therefore, the generalized
Riemann hypothesis for $L\left(s,\chi\right)$ is equivalent to that
all the zeros $\left\{ \pm z_{n}(\chi)\right\} _{n\in\mathbb{N}}$
of $f(s,\chi)$ are real where $\Re(z_{n}^{2})>0$. 

Let
\begin{equation}
\lambda_{n}(\chi)=z_{n}^{2}(\chi),\quad n\in\mathbb{N},\label{eq:3.29}
\end{equation}
 then it is know that for any $\epsilon>0$, \cite{Davenport,Edwards}
\begin{equation}
\sum_{n=1}^{\infty}\frac{1}{\left|\lambda_{n}(\chi)\right|^{1/2+\epsilon}}<\infty.\label{eq:3.30}
\end{equation}
Let 
\begin{equation}
\Theta_{\Xi}(t,\chi)=\sum_{n=1}^{\infty}e^{-\lambda_{n}^{2}(\chi)t},\quad\forall t>0,\label{eq:3.31}
\end{equation}
then for all $k\in\mathbb{N}_{0}$ by Lemma \ref{lem:2} we have
\begin{equation}
\Theta_{\Xi}^{(k)}(t,\chi)=\mathcal{O}\left(t^{-1/2-\epsilon-k}\right),\quad t\downarrow0.\label{eq:3.32}
\end{equation}

Similar to Corollary \ref{cor:5} we apply Theorem \ref{thm:main}
to $f(s,\chi)$ we obtain the following corollary:
\begin{cor}
\label{cor:4-1}The generalized Riemann hypothesis for a primitive
Dirichlet character $\chi$ is true if and only if there exists a
nonnegative integer $m$ all the functions
\begin{equation}
\left(-\frac{1}{x}\right)^{m}\frac{d^{k}}{dx^{k}}\left(x^{k+m}\frac{d^{m}}{dx^{m}}\left(\frac{f'\left(\sqrt{x},\chi\right)}{\sqrt{x}f\left(\sqrt{x},\chi\right)}\right)\right),\quad k\in\mathbb{N}_{0}\label{eq:3.33}
\end{equation}
are completely monotonic on $(0,\infty)$ where $f(s,\chi)$ is defined
by (\ref{eq:2.25}).
\end{cor}

\end{document}